\newtheorem{theorem}{Theorem}[section]
\newtheorem{lemma}[theorem]{Lemma}
\newtheorem{remark}{Remark}
\newcommand{\F}{\mathbb{F}}
\newcommand{\N}{\mathbb{N}}
\newcommand{\Z}{\mathbb{Z}}
\newcommand{\IS}{\mathbb{S}}
\newcommand{\down}{\mathord{\downarrow}}
\DeclareMathOperator{\PGL}{PGL}
\DeclareMathOperator{\AGL}{AGL}
\DeclareMathOperator{\PGammaL}{P\Gamma L}
\DeclareMathOperator{\GammaL}{\Gamma L}
\DeclareMathOperator{\AGammaL}{A\Gamma L}
\DeclareMathOperator{\GL}{GL}
\DeclareMathOperator{\PHG}{PHG}
\DeclareMathOperator{\AHG}{AHG}
\DeclareMathOperator{\PG}{PG}
\DeclareMathOperator{\AG}{AG}
\DeclareMathOperator{\GR}{GR}
\DeclareMathOperator{\rad}{rad}
\DeclareMathOperator{\Aut}{Aut}
\DeclareMathOperator{\Inn}{Inn}
\newenvironment{proof}
{\begin{trivlist}\item[]{{\sc Proof.}}}{\hfill{$\square$}\noindent\end{trivlist}}
\begin{document}

\title{$2$-arcs of maximal size\\in the affine and the projective Hjelmslev plane\\over $\Z_{25}$}
\author{Michael Kiermaier, Matthias Koch and Sascha Kurz}




\maketitle

\begin{abstract}
It is shown that the maximal size of a $2$-arc in the projective Hjelmslev plane
over $\mathbb{Z}_{25}$ is $21$, and the $(21,2)$-arc is unique up to isomorphism.
Furthermore, all maximal $(20,2)$-arcs in the affine Hjelmslev plane over
$\mathbb{Z}_{25}$ are classified up to isomorphism.
\end{abstract}

\section{Introduction}
It is well known that a Desarguesian projective plane of order $q$ admits a $2$-arc of size $q + 2$ if and only if $q$ is even.
These $2$-arcs are called \emph{hyperovals}.
The biggest $2$-arcs in the Desarguesian projective planes of odd order $q$ have size $q + 1$ and are called \emph{ovals}.

For a projective Hjelmslev plane over a chain ring $R$ of composition length $2$ and size $q^2$, the situation is somewhat similar:
In $\PHG(2,R)$ there exists a hyperoval -- that is a $2$-arc of size $q^2 + q + 1$ -- if and only if $R$ is a Galois ring of size $q^2$ with $q$ even, see \cite{Honold-Landjev-2005-FFA11:292-304, Honold-Landjev-2001-DM231:265-278}
For the case $q$ odd and $R$ not a Galois ring it was recently shown that the maximum size $m_2(R)$ of a $2$-arc is $q^2$, see \cite{Honold-Kiermaier-maximal2Arcs-2010}.

In the remaining cases, the situation is less clear.
For even $q$ and $R$ not a Galois ring, it is known that the maximum possible size of a $2$-arc is lower bounded by $q^2 + 2$ \cite{Honold-Kiermaier-maximal2Arcs-2010} and upper bounded by $q^2 + q$ \cite{Honold-Landjev-2001-DM231:265-278}.
The last case $q$ odd and $R$ a Galois ring currently is the least satisfactory one.
Besides the upper bound $q^2$, only the lower bound $\left(\frac{q+1}{2}\right)^2$ is known~\cite{Honold-Kiermaier-maximal2Arcs-2010}, leaving a comparatively large gap.

The current state of the lower and upper bounds on the maximal size of a $2$-arc for general $q$ is summarized in the following table:
\[
    \begin{array}{|cc||c|c|}
	\hline
	\multicolumn{2}{|c||}{\multirow{2}{*}{$m_2(R)$}} & \multicolumn{2}{|c|}{R} \\
	\cline{3-4}
	& & \mbox{Galois ring} & \mbox{not a Galois ring}\\
	\hline\hline
	\multirow{2}{*}{$q$} & \multicolumn{1}{|c||}{\mbox{even}} & q^2 + q + 1 & q^2 + 2\leq\cdot\leq q^2 + q\\
	\cline{2-4}
	& \multicolumn{1}{|c||}{\mbox{odd}} & \left(\frac{q+1}{2}\right)^2\leq\cdot\leq q^2 & q^2\\
	\hline
    \end{array}
\]

In the open cases for $\#R\leq 16$ the exact values were determined computationally \cite{Kiermaier-2006, Honold-Kiermaier-2006-ProcACCT10:112-117}.
The ring $R = \Z_{25}$ is the smallest one where the exact value $m_2(R)$ was not known beforehand.
In \cite{Hemme-Weijand-1999} the first $2$-arc in $\PHG(2,\Z_{25})$ of size $20$ was found.
This was the biggest known $2$-arc until in \cite{Kiermaier-Koch-2009-ProcOC11:106-113} a $2$-arc of size $21$ was given.
The main result of this paper is:

\begin{theorem}
\label{thm:main}
The maximum size of a $2$-arc in $\PHG(2,\Z_{25})$ is $21$.
The $(21,2)$-arc is unique up to isomorphism.
\end{theorem}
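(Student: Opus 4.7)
The plan is to combine structural arguments that sharply restrict the intersection pattern of a 2-arc with neighbor classes and lines of $\PHG(2,\Z_{25})$, with an isomorph-free computer-assisted enumeration over the remaining cases modulo the collineation group. Since a $(21,2)$-arc is already known to exist from \cite{Kiermaier-Koch-2009-ProcOC11:106-113}, the task splits into two parts: showing that no $(22,2)$-arc exists, and that the $(21,2)$-arc is unique up to isomorphism.

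As a first step I would exploit the residue map $\Z_{25}\to\Z_5$, which induces a projection $\pi\colon\PHG(2,\Z_{25})\to\PG(2,5)$ whose fibres (the neighbor classes) have size $25$. For a candidate 2-arc $K$, the arc condition together with the known incidence structure of $\PHG(2,\Z_{25})$ yields upper bounds on $\#(K\cap\pi^{-1}(P))$ for each $P\in\PG(2,5)$, as well as on the number of neighbor classes meeting $K$ in a prescribed cardinality. Standard averaging over lines, combined with the fact that $\pi(K)$ regarded as a multiset must respect the line bounds inherited from $\PG(2,5)$, should force the support of $\pi(K)$ to lie essentially on an oval of $\PG(2,5)$ and bound the occupation of each neighbor class by a small constant. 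This reduces the global problem to a finite and comparatively short list of admissible ``silhouettes'' $\pi(K)\subset\PG(2,5)$, to be classified modulo $\PGammaL(3,5)$.

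For each admissible silhouette I would then launch a backtracking search, building up $K$ point by point inside the prescribed neighbor classes, pruning branches that violate the 2-arc condition, and breaking symmetry with the stabiliser in $\PGammaL(3,\Z_{25})$ of the partial configuration; canonical-form or orderly-generation techniques guarantee that each orbit is visited at most once. Uniqueness at $n=21$ will follow by comparing the (hopefully unique) surviving orbit with the arc constructed in \cite{Kiermaier-Koch-2009-ProcOC11:106-113}, e.g.\ via a canonical invariant or an explicit collineation.

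The main obstacle will be keeping the search tractable: without tight structural bounds the case tree explodes, so the delicate part is to prove enough preliminary lemmas on the distribution of a hypothetical $(22,2)$-arc across neighbor classes, lines and neighbor lines to compress the residual computation to a feasible size. Ruling out all branches for $n=22$ is the step where the combinatorial pressure is highest, and where the combination of algebraic invariants of the ambient Galois ring $\Z_{25}$ with careful orbit bookkeeping will be indispensable.
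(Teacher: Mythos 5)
Your plan is essentially the paper's first approach (Section~\ref{sect:matthias}): project to the factor plane $\PG(2,5)$, use structural lemmas on point-class and line-class multiplicities to reduce the admissible point-class distributions to a short list (Lemmas~\ref{lma:ncl_cond} and~\ref{lma:22_2_class_types}), and then lift each distribution by an isomorph-free backtracking search exploiting the Homomorphism Principle and the stabilizer of the distribution. The only minor caveat is that the oval silhouette is just one of the admissible cases for a $(22,2)$-arc --- the others have $u(\mathfrak{K})=1$ with the empty classes forming a $9$-element blocking set --- but this does not change the method, which is exactly what the paper carries out (and independently cross-checks with a second approach via affine $(19,2)$- and $(20,2)$-arcs).
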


The projective Hjelmslev plane over $\Z_{25}$ has $775$ points, so the search space for arcs of size $21$ or $22$ is way too big to allow a direct attack by a backtrack search.
Therefore, special computational methods had to be developed.

In Section~\ref{sect:prelim}, we give a brief introduction to finite chain rings and coordinate Hjelmslev planes.
For details, see for example \cite{Nečaev-1973-SbornM20[3]:364-382,Nechaev-FiniteRings-2008}\footnote{In these two references, finite chain rings are called \emph{Galois-Eisenstein-Ore}-rings (\emph{GEO}-rings).} and \cite{Honold-Landjev-2001-DM231:265-278}, respectively.
For the computation, two independent approaches are used:
In Section~\ref{sect:matthias}, in a first step the possible images of a $(22,2)$-arc in the factor plane are computed up to isomorphism, and in the second step the preimages are investigated.
In Section~\ref{sect:sascha} first the $(20,2)$-arcs in the affine plane are classified and then checked for extendability to a $(22,2)$-arc in the projective plane.
In Section~\ref{sect:further}, further computational results are summarized, and the maximum size of a $2$-arc of a given maximum point class multiplicity is determined for the projective Hjelmslev planes over $\Z_{25}$ and $\IS_5$, which is the other chain ring of composition length $2$ and size $25$.

Computational nonexistence- and uniqueness proofs using highly non-trivial algorithms are always a delicate matter, since already a subtle glitch in the implementation may very well cause a totally wrong result.
For that reason we put a lot of effort on assuring the correctness of our implementations.
The maximality and uniqueness of the $(21,2)$-arc was achieved independently by our two approaches.
Furthermore, both approaches independently report the number of the $\PGL(3,R)\down$-isomorphism classes of $(20,2)$-arcs in $\AHG(2,\Z_{25})$ as $488$.

\section{Notation and Preliminaries}
\label{sect:prelim}
\subsection{Finite chain rings}

Let $R$ be a finite ring.\footnote{Rings are assumed to contain an identity element $1\neq 0$ and to be associative, but not necessarily commutative.}
$R$ is called \emph{chain ring} if the lattice of the left-ideals is a chain.
A chain ring is necessarily local, so there is a unique maximum ideal $N = \rad(R)$.
It can be shown that $N$ is a principal ideal.
Because $R$ is finite, the quotient ring $R/N$ is isomorphic to a finite field $\F_q$ of order $q = p^r$ with $p$ prime.
$R/N$ is called the \emph{residue class field} of $R$.

We will need the projection $\phi : R\rightarrow \F_q$, $a\mapsto a\mod N$, which is a surjective ring homomorphism.
The number of ideals of $R$ reduced by $1$ is the composition length of $R$, considered as a left module ${}_R R$.
This number will be denoted by $m$.
The order of $R$ is $q^m$.

An important subclass of the finite chain rings are the \emph{Galois rings}. Their definition is a slight generalization of the construction of finite fields via irreducible polynomials:

Let $p$ be prime, $r$ and $m$ positive integers, $q = p^r$ and $f\in\Z_{p^m}[X]$ be a monic polynomial of degree $r$ such that the image of $f$ modulo $p$ is irreducible in $\F_p[X]$.
Then the \emph{Galois ring} of order $q^m$ and characteristic $p^m$ is defined as
\[
    \GR(q^m,p^m) = \Z_{p^m}[X]/(f).
\]
Up to isomorphism, the definition is independent of the particular choice of $f$.
The symbols $p$, $q$, $r$ and $m$ are consistent with the earlier definitions: $\GR(q^m,p^m)$ $/$ $\rad(\GR(q^m,p^m)) \cong \F_q$ and the composition length of $\GR(q^m,p^m)$ is $m$.
Furthermore, the class of the Galois rings contains the finite fields and the integer residue class rings modulo a prime power:
$\GR(p^m,p^m)\cong\Z_{p^m}$ and $\GR(p^r,p)\cong \F_{p^r}$.

The finite fields are exactly the finite chain rings of composition length $1$.
In the following we assume that $R$ is a finite chain ring of composition length $2$.
The isomorphism types of these rings are known\cite{Raghavendran-1969-ComposM21[2]:195-229, Nečaev-1973-SbornM20[3]:364-382, Cronheim-1978-GeomDed7:287-302}:
For a fixed size $q = p^r$ of the residue field of $R$ there are $r + 1$ possible isomorphism types for $R$.
One is the Galois ring $\GR(q^2,p^2)$ of characteristic $p^2$, and the remaining $r$ ones are all of characteristic $p$ and not isomorphic to a Galois ring.
Among the latter $r$ possibilities there is a single commutative ring, which is $\IS_q = \F_q[X]/(X^2)$.

In particular, up to isomorphism the only chain rings of size $25$ and composition length $2$ are $\Z_{25}$ and $\IS_5 = \F_{5}[X]/(X^2)$.

\subsection{Affine and projective coordinate Hjelmslev planes}
\label{sect:geometry}

The \emph{projective Hjelmslev plane} $\PHG(2,R)$ over a finite chain ring $R$ is defined as follows:
The point set $\mathcal{P}(\PHG(2,R))$ [line set $\mathcal{L}(\PHG(2,R))$] is the set of the free rank $1$ [rank $2$] right submodules of the right $R$-module $R^3$, and the incidence relation is given by set inclusion.
For a point $x$ a vector $\mathbf{v}\in R^3$ with $x = \mathbf{v}R$ is called \emph{coordinate vector} of $x$.
$x$ has $\#R^\times = q(q-1)$ coordinate vectors.
There is a unique coordinate vector of $x$ whose first unit entry is $1$.

The \emph{affine Hjelmslev plane} $\AHG(2,R)$ over a finite chain ring $R$ is defined as follows:
The point set $\mathcal{P}(\AHG(2,R))$ is the set of the vectors in $R^2$, and the line set $\mathcal{L}(\AHG(2,R))$ is the set of all cosets of free rank $1$ submodules of the right $R$-module $R^2$.
The incidence relation is given by set membership.

For the number of points and lines we have
\[
\#\mathcal{P}(\PHG(2,R)) = \#\mathcal{L}(\PHG(2,R)) = \frac{q^3 - 1}{q-1} q^2\quad\mbox{and}
\]
\[
\#\mathcal{P}(\AHG(2,R)) = q^4, \quad \#\mathcal{L}(\AHG(2,R)) = q^3 (q+1).
\]

The projective and affine Hjelmslev planes share a lot of structure which will be described simultaneously: Let $(\mathcal{H},\mathcal{G})$ be either $(\PHG(2,R),\PG(2,\F_q))$ or $(\AHG(2,R)$ $,\AG(2,\F_q))$.
For a consistent description, in the following we identify a line $L$ of $\PHG(2,R)$ with the set of points incident with $L$, so that the incidence relation of both $\PHG(2,R)$ and $\AHG(2,R)$ is given by set membership.

In contrast to classical planes, in Hjelmslev planes it may happen that two distinct lines meet each other in more than a single point.
More precisely, there is more than one line passing through points $x$ and $y$ of $\mathcal{H}$ if and only if $\phi(x) = \phi(y)$, where the mapping $\phi$ is extended from $R$ to $\mathcal{P}(\mathcal{H})$.
Each preimage $\phi^{-1}(z)$ with $z\in\mathcal{P}(\mathcal{G})$ is called \emph{point class}.
For $x\in\mathcal{P}(\mathcal{H})$, the point class $\phi^{-1}(\phi(x))$ containing $x$ is denoted by $[x]$.
Similarly, two lines $L_1$ and $L_2$ of $\mathcal{H}$ intersect in more than one point if and only if $\phi(L_1) = \phi(L_2)$.
The preimages $\phi^{-1}(l)$ with $l\in\mathcal{L}(\mathcal{G})$ are called \emph{line classes}.
For $L\in\mathcal{L}(\mathcal{H})$, the line class $\phi^{-1}(\phi(L))$ containing $L$ is denoted by $[L]$.
So by our definition, a line class $[L]$ is a set of points.

Similarly to the classical case, the projective and the affine Hjelmslev plane over $R$ are tightly related:
By the mapping $\mathcal{P}(\AHG(2,R))\rightarrow\mathcal{P}(\PHG(2,R))$, $\mathbf{v}\mapsto (1,\mathbf{v})R$, the affine Hjelmslev plane is embedded in the projective Hjelmslev plane.
We will refer to this embedding as the \emph{standard embedding}.
The points in $\PHG(2,R)$ not contained in the image of $\AHG(2,R)$ form a line class.
This line class is called \emph{line class at infinity} and given by all points $\mathbf{v}R$ such that the first component of $\mathbf{v}$ is not a unit.
On the other hand, removing a line class $[L]$ from $\PHG(2,R)$ yields a plane isomorphic to $\AHG(2,R)$, and $[L]$ is its line class at infinity.

There is also an axiomatic definition of projective and affine Hjelmslev planes which does not rely on an underlying chain ring \cite{Klingenberg-1954-MathZeit60:384-406, Lüneburg-1962-MathZeit79:260-288}.
This definition is more general than the one given above: Not every axiomatically defined Hjelmslev plane can be coordinatized over a finite chain ring.
For that reason, the Hjelmslev planes $\PHG(2,R)$ and $\AHG(2,R)$ are also called \emph{coordinate} Hjelmslev planes.
The axiomatic definition of an affine Hjelmslev plane involves a \emph{parallelism}, which is an equivalence relation on the set of lines satisfying Euclid's parallel axiom.
In contrast to classical affine planes, in general the parallelism of an affine Hjelmslev plane is not uniquely determined by its underlying incidence structure.
For a coordinate affine Hjelmslev plane $\AHG(2,R)$ embedded in $\PHG(2,R)$, each line $L$ contained in the line class at infinity induces a parallelism in the following way: Two lines in $\mathcal{L}(\AHG(2,R))$ are called parallel if and only if they pass through the same point on $L$.
Using the standard embedding, we will call the parallelism induced by the line $(1,0,0)^\perp$ the \emph{standard parallelism}.

\subsection{Induced subgeometries}
Because $\phi : \mathcal{P}(\mathcal{H})\rightarrow\mathcal{P}(\mathcal{G})$ maps lines to lines, the geometry of the point classes and the line classes with incidence given by the subset relation is isomorphic to the plane $\mathcal{G}$.
This plane is called \emph{factor plane} of $\mathcal{H}$.

The restriction of the geometry $\mathcal{H}$ to a single point class $[x]$ is isomorphic to the affine plane $\AG(2,\F_q)$.
It will be denoted by $\Pi_{[x]}$.
Each line in one of these affine geometries is called \emph{line segment} when considered as a subset of $\mathcal{P}(\mathcal{H})$.

Now let $L\in\mathcal{L}(\mathcal{H})$ be a line.
Let $[x]$ be a point class incident with $[L]$.
Then $L\cap[x]$ is a line segment contained in $[x]$.
The set of all line segments $P$ arising as $L^\prime\cap[x]$ with $L^\prime\in[L]$ forms a parallel class of lines in the affine plane on $[x]$.
In this way, the parallel classes on $[x]$ are in bijection with the line classes incident with $[x]$ in the factor plane.
We will say that the line class $[L]$ \emph{determines} the parallel class $P$, and that the parallel class $P$ \emph{determines} the line class $[L]$.
Since a parallel class in $\Pi_{[x]}$ is already determined by a single line or by the unique line passing through a pair of distinct points in $[x]$, we will also use the formulation that the line class $[L]$ is determined by a line segment or by a pair of distinct points.

In the projective case $\mathcal{H} = \PHG(2,R)$ the line segments determined by a line class form a geometry in the following sense:
Let
\[\mathfrak{P} = \{\mbox{all line segments determined by }[L]\}\cup\{\infty\}.\]
For each point class $[x]$ incident with $[L]$ we define
\[L_{[x]} = \{\mbox{all line segments determined by }[L]\mbox{ contained in }[x]\}\cup\{\infty\}
\]
and $\mathfrak{L} = \{L' : L'\in\mathcal{L}(\mathcal{H}), L'\subset [L]\} \cup \{L_{[x]} : x \in [L]\}$.
Then the geometry $\Pi_{[L]}$ consisting of the point set $\mathfrak{P}$ and the line set $\mathfrak{L}$ with the incidence relation defined in the obvious way\footnote{Let $X\in\mathfrak{L}$. A line segment $S$ is incident with $X$ if $S\subseteq L$, and the point $\infty$ is incident with $X$ if $\infty\in L$.} is isomorphic to the projective plane $\PG(2,\F_q)$.

\subsection{Collineations}
\label{subsect:collineations}
A \emph{collineation} of a point-line geometry is a bijection on the point set mapping lines to lines.
As usual, two multisets of points are called \emph{isomorphic} if they are contained in the same orbit under the action of the collineation group.
In this article we do not require that a collineation of an affine Hjelmslev plane preserves its parallelism.
The reason for this is that the group of collineations will be used to reduce the search space of the maximal arc problem, which does not depend on the parallelism.

The group $\GammaL(3,R)$ consists of all elements $(A,\sigma)$ with $A\in\GL(3,R)$ and $\sigma\in\Aut(R)$.
The multiplication in $\GammaL(3,R)$ is given by $(A,\sigma)\cdot(B,\rho) = (A\sigma(B),\sigma\rho)$.
Thus $\GammaL(3,R)$ is a semidirect product $\GL(3,R)\rtimes\Aut(R)$.
$\GammaL(3,R)$ acts on the set of points of $\mathcal{H}$ via $(\mathbf{A},\sigma)\cdot \mathbf{v}R = \mathbf{A}\sigma(\mathbf{v}) R$.
The kernel of this group action is $N = \{(\mathbf{I}_3 \lambda^{-1}, a\mapsto \lambda a\lambda^{-1}) : \lambda\in R^*\}$, where $\mathbf{I}_3$ denotes the $3\times 3$ unit matrix.
Now the \emph{projective semilinear group} $\PGammaL(3,R)$ is defined as the factor group $\GammaL(3,R) / N$ and acts faithfully on $\mathcal{P}(\mathcal{H})$.
By the fundamental theorem of projective Hjelmslev geometry \cite{Kreuzer-1988}, $\PGammaL(3,R)$ is the group of collineations of $\PHG(2,R)$.

Starting with $\GL(3,R)$ instead of $\GammaL(3,R)$, the kernel of the group action on $\mathcal{P}(\mathcal{H})$ is $N = \mathbf{I}_3 Z(R^*)$, where $Z(R^*)$ is the center of the unit group of $R$.
The \emph{projective linear group} $\PGL(3,R)$ is defined as $\PG(3,R) / N$.
It is worth mentioning that while the index of $\GL(3,R)$ in $\GammaL(3,R)$ is $\#\Aut(R)$, the index of $\PGL(3,R)$ in $\PGammaL(3,R)$ is only $[\Aut(R) : \Inn(R)]$, where $\Inn(R)$ denotes the group of inner automorphisms of $R$.
$\PGL(3,R)$ acts transitively on the set of ordered quadruples of points in general position (no $2$ points in the same point class, no $3$ collinear non-empty point classes) \cite[Thm. 17]{Cronheim-1978-GeomDed7:287-302}.
This group action is regular if and only if $R$ is commutative.

In the following it is assumed that $\AHG(2,R)$ is embedded into $\PHG(2,R)$ by the standard embedding.
It is clear that all $\sigma\in\PGammaL(3,R)$ fixing $\AHG(2,R)$ as a set give rise to a collineation of $\AHG(2,R)$, which will be denoted by $\sigma\down$.
The stabilizer of $\AHG(2,R)$ in $\PGammaL(3,R)$ will be denoted by $\PGammaL(3,R)_\infty$, and the set of induced collineations of $\AHG(2,R)$ will be denoted by $\PGammaL(3,R)\down$.
The symbols $\PGL(3,R)_\infty$ and $\PGL(3,R)\down$ are defined analogously.
It can be checked that the restriction $\down\colon\PGammaL(3,R)_\infty\rightarrow\PGammaL(3,R)\down$ is a bijection.
In general, $\PGammaL(3,R)\down$ is not the full collineation group of $\AHG(2,R)$:
For example $\PGammaL(3,\Z_9)\down = \PGL(3,\Z_9)\down$ has index $3$ in the full collineation group of $\AHG(2,\Z_9)$.

Let $\rho : \mathbf{v}R\mapsto \mathbf{A}\sigma(\mathbf{v})R$ be an element of $\PGammaL(3,R)_\infty$, where $(\mathbf{A},\sigma)\in\GammaL(3,R)$ with $\mathbf{A}\in\GL(3,R)$ and $\sigma\in\Aut(R)$ is a representative of an element of $\PGammaL(3,R)$.
Then we may choose $\mathbf{A}$ in the form
\[
\mathbf{A} = \begin{pmatrix}1 & c_1 & c_2\\b_1 & a_1 & a_2\\b_2 & a_3 & a_4\end{pmatrix}
\]
with $a_1,a_2,a_3,a_4,b_1,b_2\in R$ and $c_1,c_2\in\rad(R)$.
Let $\mathbf{A}^\prime = \begin{pmatrix}a_1 & a_2\\a_3 & a_4\end{pmatrix}$, $\mathbf{b} = \begin{pmatrix}b_1 \\ b_2\end{pmatrix}$ and $\mathbf{c} = (c_1, c_2)\in \rad(R)\times \rad(R)$.
Since $\mathbf{A}$ is invertible, so is $\mathbf{A}^\prime$.

By the standard embedding each element of $\AHG(2,R)$ has the form $\begin{pmatrix}1\\\mathbf{v}\end{pmatrix}$ with $\mathbf{v}\in R^2$.
It holds $\rho\left(\begin{pmatrix}1\\\mathbf{v}\end{pmatrix}\right) = \mathbf{A} \begin{pmatrix}1\\\sigma(\mathbf{v})\end{pmatrix} = \begin{pmatrix}1 + \mathbf{c}\sigma(\mathbf{v})\\\mathbf{A}^\prime\sigma(\mathbf{v}) + \mathbf{b}\end{pmatrix}$.
Because of $\mathbf{c}\in\rad(R)\times\rad(R)$, $(1 + \mathbf{c}\sigma(\mathbf{v}))^{-1} = (1-\mathbf{c}\sigma(\mathbf{v}))$, so $\rho\left(\begin{pmatrix}1\\\mathbf{v}\end{pmatrix}\right)R = \begin{pmatrix}1 \\ (\mathbf{A}^\prime \sigma(\mathbf{v}) + \mathbf{b})(1 - \mathbf{c}\sigma(\mathbf{v}))\end{pmatrix}R$.
So in affine coordinates $\rho\down$ maps $\mathbf{v}$ to $-\mathbf{A}^\prime\sigma(\mathbf{v})\mathbf{c}\sigma(\mathbf{v}) + (\mathbf{A}^\prime - \mathbf{b}\mathbf{c})\sigma(\mathbf{v}) + \mathbf{b}$.

$\rho\down$ preserves the standard parallelism if and only if $\mathbf{c} = \mathbf{0}$.
Hence the mappings in $\PGammaL(3,R)\down$ preserving the standard parallelism are given by the group $\AGammaL(2,R)$ consisting of all mappings $R^2\rightarrow R^2, \mathbf{v}\mapsto \mathbf{A}\sigma(\mathbf{v}) + \mathbf{b}$ with $\mathbf{A}\in\GL(2,R)$, $\sigma\in\Aut(R)$ and $\mathbf{b}\in R^2$.
It holds
\begin{align*}
\#\AGammaL(2,R) & = (q-1)^2 q^9 (q+1)\cdot\#\Aut(R),\\
[\PGammaL(3,R)\down : \AGammaL(2,R)] & = q^2\quad\mbox{and}\\
[\PGammaL(3,R) : \PGammaL(3,R)\down] & = q^2 + q + 1.
\end{align*}

For example the ring of our main interest $R = \Z_{25}$ has only the trivial automorphism.
So $\PGammaL(3,\Z_{25}) = \PGL(3,\Z_{25})$, $\AGammaL(2,\Z_{25}) = \AGL(2,\Z_{25})$, $\#\PGL(3,\Z_{25}) = 145312500000$, $\#\PGL(3,\Z_{25})\down = 4687500000$ and $\#\AGL(2,\Z_{25}) = 187500000$.
We used the graph isomorphism package \texttt{nauty} \cite{McKay-Nauty} to see that $\PGL(3,\Z_{25})\down$ is in fact the full collineation group of $\AHG(2,\Z_{25})$.

\subsection{Arcs}
For any geometry and any $n\in\N$, a multiset of points $\mathfrak{k}$ of size $n$ is called \emph{$(n,w)$-arc}, if no $w+1$ elements of $\mathfrak{k}$ are collinear\footnote{Of course we have to
respect multiplicities for counting the number of collinear points.}.
We denote by $m_w(R)$ the maximum size of a $w$-arc in the projective Hjelmslev plane $\PHG(2,R)$.
There is an online table \cite{arc-tables} for known lower bounds on $m_w(R)$.

We want to mention that the literature is inconsistent about allowing an arc to be a proper multiset.
Our definition is motivated by the connection to coding theory \cite{Honold-Landjev-2000-EJC7:R11} where the possibly repeated columns of a generator matrix are interpreted as coordinate vectors in projective Hjelmslev geometries.
This article is about $2$-arcs, where the different possible definitions do not matter:
The only proper multiset which is a $2$-arc consists of a single point of multiplicity $2$.

For an arc $\mathfrak{K}$ and a point set $X\subseteq\mathcal{P}$, the size $\#(X\cap\mathfrak{K})$ is called \emph{multiplicity} of $X$.
In this way, multiplicities are declared for point classes and line classes.
A point class of multiplicity $u$ will simply be called \emph{$u$-class}.
Furthermore, the maximum value of $u$ such that there is a $u$-class will be denoted by $u(\mathfrak{K})$.
It is the \emph{maximum point class multiplicity} of $\mathfrak{K}$.

\section{Computation via the images in the factor plane}
\label{sect:matthias}

We define the map $\Phi : \N^{\mathcal{P}(\PHG(2,R))} \rightarrow \N^{\mathcal{P}(\PG(2,R))}$ as the extension of $\phi$ to multisets.
For a multiset of points $\mathfrak{K}$ in $\PHG(2,R)$, the image $\Phi(\mathfrak{K})$ reduces $\mathfrak{K}$ to the distribution of the points in $\mathfrak{K}$ into the point classes of $\PHG(2,R)$.

The strategy for the algorithm is to generate all possible images
$\Phi(\mathfrak{K})$ up to $\PG(2,\F_q)$-isomorphism in a first step.
In a second step, the preimages of these images are generated.
Concerning the algorithmic complexity, the second step is much harder than the first
one.

This approach suggests itself for several reasons:
Without the intermediate step, the computations must be done under the full collineation group $\PGammaL(3,R)$.
But to compute the possible preimages of a given point class distribution $\mathfrak{k}$ in the second step, the acting group is reduced to the preimage in $\PGammaL(3,R)$ of the stabilizer of $\mathfrak{k}$ by the Homomorphism Principle~\cite{Laue-2001} and the fact that the mapping $\Phi$ naturally extends to a homomorphism of the group actions, mapping the action of $\PGammaL(3,R)$ on $\mathcal{P}(\PHG(2,R)$ to the action of $\PGammaL(3,\F_q)$ on $\mathcal{P}(\PG(2,\F_q))$, 
Furthermore, the next lemma (compare \cite[Theorem 3.5]{Honold-Landjev-2001-DM231:265-278}) summarizes severe restrictions on the possible point class distributions, which allow a drastic pruning of the search tree between the two steps:

\begin{lemma}
\label{lma:ncl_cond}
Let $\mathcal{H}$ be the projective Hjelmslev plane over $R$ and assume that its factor plane $\mathcal{G}$ is of odd order $q$.
Let $\mathfrak{K}$ be a $2$-arc in $\mathcal{H}$.
\begin{enumerate}[(a)]
\item\label{lma:ncl_cond:ncl2} Each $2$-class is incident with a line class $[L]$ of multiplicity $2$.
\item\label{lma:ncl_cond:bs} Each line class is incident with a point class of multiplicity $0$.
\item\label{lma:ncl_cond:u2} If $u(\mathfrak{K}) = 2$, then $\#\mathfrak{K}\leq q^2 - q + 2$.
\item\label{lma:ncl_cond:ncl3} If $u(\mathfrak{K}) \geq 3$, then $\#\mathfrak{K} \le q^2 - 2q + 3$.
\item\label{lma:ncl_cond:q+2} If $u(\mathfrak{K})\leq 2$, then each line class has multiplicity at most $q+1$.
\item\label{lma:ncl_cond:q+1} If $u(\mathfrak{K})\leq 2$, then the number of $2$-classes incident with a line class of multiplicity $q+1$ is either $\frac{q-1}{2}$ or $\frac{q+1}{2}$.
\end{enumerate}
\end{lemma}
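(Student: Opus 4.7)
The plan is to translate the $2$-arc property of $\mathfrak{K}$ into a $2$-arc (respectively oval) condition in the induced plane $\Pi_{[L]}\cong\PG(2,\F_q)$ attached to each line class $[L]$, and then invoke two classical facts about $\PG(2,q)$ for $q$ odd: hyperovals do not exist, and a point off an oval is either interior (zero tangents through it) or exterior (exactly two tangents). The pivotal observation is that three arc-bearing $[L]$-segments collinear on an $\mathcal{H}$-line $L'\subset[L]$ would force $L'$ to meet $\mathfrak{K}$ in at least three points, contradicting the $2$-arc property.

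For (a) I would pick the $\mathcal{H}$-line $L$ through the two arc points $P_1,P_2$ of a $2$-class $[x]$ and set $s:=L\cap[x]$; an arc point $Q\in[L]\setminus[x]$ would have an $[L]$-segment $\tau$ on a different fiber of $\Pi_{[L]}$ than $s$, so the unique line through $s$ and $\tau$ in $\Pi_{[L]}$ would be an $\mathcal{H}$-line carrying $\{P_1,P_2,Q\}$, a contradiction. For (b), if every one of the $q+1$ point classes on $[L]$ contained an arc point, choosing one representative per class would produce $q+2$ points $\{\infty,s_0,\dots,s_q\}$ in $\Pi_{[L]}$ with one $s_i$ on each fiber; the non-existence of hyperovals in $\PG(2,\F_q)$ for $q$ odd would force three of them to be collinear, and since collinearity through $\infty$ would require two $s_i$'s on one fiber, this must happen on an $\mathcal{H}$-line, yielding three arc points on a single line.

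I would prove (e) next and use it for (c) and (f). Let $S\subset\Pi_{[L]}$ be the set of $[L]$-segments meeting $\mathfrak{K}$; three collinear elements of $S$ on a fiber $L_{[y]}$ would force three arc-bearing segments in $[y]$ and hence three arc points in $[y]$, violating $u\leq 2$, while three collinear on an $\mathcal{H}$-line would violate the $2$-arc property. So $S$ is a $2$-arc of $\Pi_{[L]}$, hence $|S|\leq q+1$ by the oval bound; since under $u\leq 2$ distinct arc points in $[L]$ lie in distinct $[L]$-segments, $m_{[L]}=|S|\leq q+1$ (with the exception that $[L]$ is the line class produced by (a) for some $2$-class, where $m_{[L]}=2$ already). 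Part (c) is then immediate: for a $2$-class $[x]$ with its line class $[L^*]$ from (a), $[L^*]$ contributes no arc points outside $[x]$ and each of the other $q$ line classes through $[x]$ has at most $q-1$ arc points outside $[x]$, so $|\mathfrak{K}|-2\leq q(q-1)$. Part (f) follows by reading off the oval structure of $S$ when $|S|=q+1$: $\infty$ lies off $S$, its tangents in $\Pi_{[L]}$ are exactly the fibers meeting $S$ in a single segment (the $1$-class fibers on $[L]$), and the interior/exterior dichotomy forces their number to be $0$ or $2$, so with $2a+b=q+1$ counting the $2$- and $1$-classes on $[L]$ one obtains $b\in\{0,2\}$ and $a\in\{(q-1)/2,(q+1)/2\}$.

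For (d) I would fix a class $[x]$ with $k:=u(\mathfrak{K})\geq 3$ arc points and single out three non-collinear ones $P_1,P_2,P_3\in[x]$; their three pair-directions yield three distinct \emph{heavy} line classes through $[x]$ whose $[x]$-segments carry two arc points each. The argument of (a) generalises: any arc point $Q$ outside $[x]$ on such a heavy line class would, together with the full pair-segment, produce an $\mathcal{H}$-line carrying three arc points. So these three line classes contribute nothing outside $[x]$, and each of the remaining $q-2$ \emph{light} line classes through $[x]$ carries at most $q$ arc points outside $[x]$ (any such point lies on a unique line of $[L]$ through $P_1$, and there are only $q$ such lines). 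This yields $|\mathfrak{K}|\leq k+q(q-2)=q^2-2q+k$, matching the claim for $k=3$. The main obstacle is the case $k\geq 4$, where one needs to upgrade the heavy-class count from $3$ to at least $k$; this amounts to showing that a $2$-arc of size $k$ in $\AG(2,\F_q)$ with $q$ odd determines at least $k$ distinct pair-directions, since any configuration forcing fewer would align multiple pairs in parallelogram-style patterns that one can argue already contain three collinear arc points in $\AG(2,\F_q)$.
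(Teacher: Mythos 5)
Your proposal is correct and follows essentially the same route as the paper: pass to the induced plane $\Pi_{[L]}$, apply the $2$-arc/oval bounds and the internal--external tangent dichotomy in $\PG(2,\F_q)$ for odd $q$ (your part~(b) via non-existence of hyperovals on a transversal set is a minor, equally valid variant of the paper's tangent count), and for~(d) count the line classes determined by the pairs in a maximal point class. The one substantive remark concerns your flagged ``main obstacle'' in~(d): you do not need $k$ determined directions, only at least $4$ when $k\ge 4$, and this already follows from your parallelogram observation applied to any four of the points (in odd characteristic the diagonals of a parallelogram are never parallel), which gives $\#\mathfrak{K}\le (q+1)+q(q-3)=q^2-2q+1<q^2-2q+3$, exactly as in the paper.
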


\begin{proof}
To show part~(\ref{lma:ncl_cond:ncl2}), let $[x]$ be a $2$-class.
Because $\mathfrak{K}$ is a $2$-arc, the line class determined by the $2$ points in $[x]$ has multiplicity $2$.

Let $[L]$ be a line class in $\mathcal{H}$.
For each $u$-class $[x]$ on $[L]$ of multiplicity $u \geq 3$ we arbitrarily remove $u-2$ points in $[x]$ from $\mathfrak{K}$.
For the remaining point set $\mathfrak{k}$ we have $\#(\mathfrak{k}\cap[x])\leq 2$ for all point classes incident with $[L]$.
In the case $u(\mathfrak{K})\leq 2$ it holds $\mathfrak{K} = \mathfrak{k}$.
The multiset of points in $\Pi_{[L]}$ induced by $\mathfrak{k}\cap [L]$ is a $2$-arc.
Because $q$ is odd, we get $\#(\mathfrak{k}\cap [L])\leq q+1$ which shows
part~(\ref{lma:ncl_cond:q+2}).
Furthermore, in the case $\#(\mathfrak{k}\cap [L]) = q+1$ the induced multiset of
points forms an oval in $\Pi_{[L]}$.
The point $\infty$ is either external or internal to this oval, so the lines
$L_{[x]} : x\in [L]$ through $\infty$ comprise either $\frac{q-1}{2}$
passants, $2$ tangents, $\frac{q-1}{2}$ secants or $\frac{q+1}{2}$ passants,
$\frac{q+1}{2}$ secants.
Since the line segments on $L_{[x]}$ cover the point class $[x]$,
parts~(\ref{lma:ncl_cond:bs}) and (\ref{lma:ncl_cond:q+1}) follow.

Now let $[x]$ be a point class of maximum multiplicity $u(\mathfrak{K})$.
$\mathfrak{K}\cap[x]$ is a $2$-arc in $\Pi_{[x]}$, so $u(\mathfrak{K}) = \#(\mathfrak{K}\cap[x]) \leq q+1$.
If $u(\mathfrak{K}) = 2$, then by part~(\ref{lma:ncl_cond:ncl2}) there is a line class $[L]$ through $[x]$ with $\#([L]\setminus[x]) = 0$ and by part~(\ref{lma:ncl_cond:q+2}) each of the remaining $q$ line classes through $[x]$ contain at most $q-1$ points outside of $[x]$.
This shows $\#\mathfrak{K} \leq 2 + q(q-1) = q^2 - q + 2$, which is part~(\ref{lma:ncl_cond:u2}).

If $u(\mathfrak{K}) = 3$, then the pairs of points in $\mathfrak{K}\cap[x]$ determine $3$ line
classes.
Since $\mathfrak{K}$ is a $2$-arc, these line classes cannot contain any other point of $\mathfrak{K}$.
Now let $[L]$ be one of the $q-2$ remaining line classes incident with $[x]$, and $y\in\mathfrak{K}\cap [x]$.
There are $q$ lines in $[L]$ incident with $y$, and these lines cover $[L]\setminus
[x]$.
On each of these lines there is at most one further point of $\mathfrak{K}$.
So we get $\#\mathfrak{K} \leq 3 + q(q-2) = q^2 - 2q + 3$.
Since $q$ is odd, for $u(\mathfrak{K}) \ge 4$ the pairs of points in $\mathfrak{K}\cap[x]$ determine
at least $4$ line classes.
The same reasoning leads to $\#\mathfrak{K} \leq u + q(q-3) \leq (q+1) + q(q-3) =
q^2 - 2q + 1$.
This shows part~(\ref{lma:ncl_cond:ncl3}).
\end{proof}

\begin{remark}\label{rmk:ubig}
For odd $q\geq 5$ each point of $\PG(2,q)$ lies on at least $2$ secants of a given oval.
This shows that in the two cases of Lemma~\ref{lma:ncl_cond}(\ref{lma:ncl_cond:q+1}), the point set $\mathfrak{K}\cap[L]$ cannot be extended if we drop the condition $u(\mathfrak{K})\leq 2$.
This puts serious restrictions on the point class distribution of a line class $[L]$ of multiplicity $>q+1$, which can be used to improve the bound in part~(\ref{lma:ncl_cond:ncl3}) of the preceding Lemma.

In Theorem~\ref{thm:u}, this will be done explicitly for the chain rings $\Z_{25}$ and $\IS_5$.
\end{remark}

\subsection{Creating the point class distributions}
The first task for constructing all $(n,2)$-arcs in $\PHG(2,\Z_{25})$ is the
creation of all isomorphism types of point class distributions obeying the
restrictions of Lemma~\ref{lma:ncl_cond}.
For the critical case $n=22$, this can be done without the use of a computer:

\begin{lemma}
\label{lma:22_2_class_types}
Let $\mathfrak{K}$ be a $(22,2)$-arc in the projective Hjelmslev plane over $\Z_{25}$ or $\IS_5$.
The isomorphism type of the point class distribution $\Phi(\mathfrak{K})$
in the factor plane is given by one of the following cases:
\begin{enumerate}[(1)]
\item\label{lma:22_2_class_types:1} $u(\mathfrak{K}) = 1$.
\begin{enumerate}[(a)]
\item\label{lma:22_2_class_types:1:a} In the factor plane, the $0$-classes are given by a line $L$ and three collinear points not incident with $L$.
\item\label{lma:22_2_class_types:1:b} In the factor plane, the $0$-classes classes are given by a line $L$ and three non-collinear points not incident with $L$.
\item\label{lma:22_2_class_types:1:c} The $0$-classes form a projective triangle in the factor plane.
\end{enumerate}
\item\label{lma:22_2_class_types:2} $u(\mathfrak{K}) = 2$, and there are six $2$-classes forming an oval in the factor plane.
The point pairs in the $2$-classes determine the line classes tangent to the oval.
The point classes internal to the oval are $1$-classes, and the point classes external to the oval are $0$-classes.
\end{enumerate}
\end{lemma}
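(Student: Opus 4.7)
The plan is to split according to the maximum point-class multiplicity $u(\mathfrak{K})$. Since $\#\mathfrak{K}=22>18=q^2-2q+3$ for $q=5$, Lemma~\ref{lma:ncl_cond}(\ref{lma:ncl_cond:ncl3}) rules out $u(\mathfrak{K})\geq 3$, so $u(\mathfrak{K})\in\{1,2\}$. If $u(\mathfrak{K})=1$, the distribution consists of $22$ $1$-classes and $9$ $0$-classes, and Lemma~\ref{lma:ncl_cond}(\ref{lma:ncl_cond:bs}) forces the $0$-classes to form a blocking set of $\PG(2,5)$ of size $9$. The three listed subcases then correspond to the three isomorphism types of such blocking sets: either the blocking set contains a line, with its three off-line points collinear (case~(\ref{lma:22_2_class_types:1:a})) or forming a triangle (case~(\ref{lma:22_2_class_types:1:b})); or it is irreducible and hence a projective triangle, by the Blokhuis-type classification of minimum non-trivial blocking sets in $\PG(2,p)$ for $p$ prime (case~(\ref{lma:22_2_class_types:1:c})).

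For $u(\mathfrak{K})=2$, write $a$ for the number of $2$-classes. Summing line-class multiplicities over the $q+1=6$ line classes through a fixed $2$-class $[x]$ gives $\#\mathfrak{K}+5\cdot 2=32$; combined with Lemma~\ref{lma:ncl_cond}(\ref{lma:ncl_cond:ncl2}) (a tangent of multiplicity $2$ through $[x]$) and Lemma~\ref{lma:ncl_cond}(\ref{lma:ncl_cond:q+2}) (each line-class multiplicity is at most $6$), this is only possible if exactly one tangent and five multiplicity-$6$ line classes pass through $[x]$. By Lemma~\ref{lma:ncl_cond}(\ref{lma:ncl_cond:q+1}) each multiplicity-$6$ line class meets the $2$-class set in $k\in\{2,3\}$ points; writing $N_k$ for the number of multiplicity-$6$ line classes with $k$ $2$-classes, a double count of $2$-classes and of pairs of $2$-classes on these line classes gives
\begin{align*}
2N_2+3N_3&=5a,\\
N_2+3N_3&=\binom{a}{2},
\end{align*}
so $N_2=a(11-a)/2$ and $N_3=a(a-6)/3$. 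Non-negativity and the integrality condition $3\mid a(a-6)$ leave only $a\in\{6,9\}$.

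The main obstacle is to rule out $a=9$, for which the bare multiplicity counts are consistent. The plan is to argue as follows: a multiplicity-$6$ line class with $k=2$ $2$-classes carries $m=2$ $1$-classes (from $2k+m=6$ together with Lemma~\ref{lma:ncl_cond}(\ref{lma:ncl_cond:q+1})), and so determines a distinct pair of $1$-classes. If $a=9$, only $22-18=4$ $1$-classes are available, allowing at most $\binom{4}{2}=6$ pairs, contradicting $N_2=9$. Hence $a=6$, and since $N_3=0$ the six $2$-classes form an oval in $\PG(2,5)$. The $15$ multiplicity-$6$ $k=2$ line classes are the oval's bisecants; the $6$ multiplicity-$2$ line classes are its tangents (by Lemma~\ref{lma:ncl_cond}(\ref{lma:ncl_cond:ncl2}) they are exactly those determined by the arc-point pairs in the $2$-classes); and the remaining $10$ factor lines are its passants. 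Since a $1$-class cannot lie on a tangent (where $m=0$), every $1$-class is internal to the oval; the count $10=10$ identifies the $1$-classes with the internal points, whence the $15$ $0$-classes are the external points.
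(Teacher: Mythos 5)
Your proof is correct and follows essentially the same route as the paper: the same case split on $u(\mathfrak{K})$ via Lemma~\ref{lma:ncl_cond}, the same reduction of the $u=1$ case to the classification of $9$-point blocking sets in $\PG(2,5)$, and the same double counts yielding $\#P_2\in\{6,9\}$. The only deviation is your exclusion of the nine-$2$-class case, where you count the pairs of $1$-classes cut out by the $N_2=9$ bisecant-type line classes against the $\binom{4}{2}=6$ available pairs, instead of the paper's flag count $2\#L_2=18\le 4\#P_1=16$; both are valid and of the same difficulty.
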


\begin{proof}
By Lemma~\ref{lma:ncl_cond}(\ref{lma:ncl_cond:ncl3}), $u(\mathfrak{K})\leq 2$.
If $u(\mathfrak{K}) = 1$, by Lemma~\ref{lma:ncl_cond}(\ref{lma:ncl_cond:bs}) the $0$-classes form a blocking set of size $31 - 22 = 9$ in the factor plane.
It is known that such a blocking set either contains a line, or it is isomorphic to
a projective triangle.
Using the fact that the collineation group of $\PHG(2,R)$ acts transitively on the quadruples of points in general position, this gives the $3$ isomorphism types listed in case~(\ref{lma:22_2_class_types:1}).

Now we assume $u(\mathfrak{K}) = 2$.
Let $P_i$ be the set of all $i$-classes and $[x]\in P_2$ be a $2$-class.
By Lemma~\ref{lma:ncl_cond}(\ref{lma:ncl_cond:ncl2}), one of the line classes incident with $[x]$ is of multiplicity~$2$, and by Lemma~\ref{lma:ncl_cond}(\ref{lma:ncl_cond:q+2}) the remaining $5$ line
classes incident with $[x]$ contain at most $6$ points of $\mathfrak{K}$ each.
Because of $\#\mathfrak{K} = 22$, it follows that these $5$ line classes
contain exactly $6$ points of $\mathfrak{K}$ each, so by Lemma~\ref{lma:ncl_cond}(\ref{lma:ncl_cond:q+1}) each of these line classes is incident with one or two further $2$-classes.
This shows $6\leq \#P_2 \leq 11$.

Let $L_i$ denote the set of line classes incident with exactly $i$ point
classes in $P_2$.
We have seen that each point class $[y]\in P_2$ is incident with exactly
$5$ line classes in $L_2\cup L_3$, so double counting the set of flags
$([y],[L])$ with $[y]\in P_2$, $[L]$ incident with $[y]$ and $[L]\in L_2\cup L_3$
gives the equation $5\#P_2 = 2\#L_2 + 3\#L_3$.
Furthermore, double counting the pairs of point classes in $P_2$ shows $\binom{\#P_2}{2} =
\#L_2 + 3\#L_3$.
Solving this equation system for $(\#L_2,\#L_3)$ yields $2\#L_2 = \#P_2 (11 -
\#P_2)$ and $3\#L_3 = \#P_2 (\#P_2 - 6)$.
So $3\mid \#P_2$ and therefore $\#P_2\in\{6,9\}$.

In the case $\#P_2 = 9$ it holds $\#P_1 = 4$ and $\#L_2 = \#L_3 = 9$.
Let $X$ be the set of all flags $([y],[L])$ with $[y]\in P_1$, $[L]$ incident with
$[y]$ and $[L]\in L_2$.
We count $\#X$ in two ways:
Since all line classes in $L_2$ have multiplicity $6$, each line class in $L_2$ is incident with exactly $2$ point classes in $P_1$.
This shows $\#X = 2\#L_2 = 18$.
On the other hand, because of $\#P_2 = 9$, a point class $[y]\in P_1$ is
incident with at most $4$ line classes in $L_2$, which leads to the
contradiction $\#X \leq 4\#P_1 = 16$.

So we get $\#P_2 = 6$ and $\#L_3 = 0$.
This means that the point classes in $P_2$ form an oval in the factor plane.
The only possibility for the line classes determined by the point pairs
$\mathfrak{K}\cap [y]$, $[y]\in P_2$ are the line classes tangent to the
oval $P_2$.
It follows that the $15$ point classes external to $P_2$ are $0$-classes.
Because of $\#\mathfrak{K} = 22$, all the point classes internal to $P_2$ must be $1$-classes.
\end{proof}

\begin{remark}
\begin{enumerate}[(a)]
\item For the base ring $\Z_{25}$, the point class distributions
(\ref{lma:22_2_class_types:1:a}) and (\ref{lma:22_2_class_types:1:b}) are not
possible:
Because there is an empty line class, such a $(22,2)$-arc would be
contained in an affine subplane of $\PHG(2,\Z_{25})$.
But in \cite{Kurz-2009-SJoC3:159-178} it was shown computationally that the maximum
size of a $2$-arc in $\AHG(2,\Z_{25})$ is $20$.
\item While our main interest is in the projective Hjelmslev plane over $\Z_{25}$, Lemma~\ref{lma:22_2_class_types} also holds for $\IS_5$.
Let $\mathcal{H} = \PHG(2,\IS_5)$.
The existence of a $(25,2)$-arc in $\mathcal{H}$ implies the existence of a
$(22,2)$-arc in $\mathcal{H}$ realizing the point class distributions
(\ref{lma:22_2_class_types:1:a}) and (\ref{lma:22_2_class_types:1:b}).
Furthermore, the $(22,2)$-arc $\mathfrak{k}_{\IS_5}$ in $\mathcal{H}$ given in
\cite{Kiermaier-Koch-2009-ProcOC11:106-113} realizes the point class
distribution (\ref{lma:22_2_class_types:2}).
\end{enumerate}
\end{remark}

The automatic construction of the point class distribution is done by a
backtracking search, checking the conditions of Lemma~\ref{lma:ncl_cond} in each
step.
Because only one representative of each isomorphism class is needed, we apply
orderly generation \cite{0392.05001}.
Whenever a complete point class distribution $\mathfrak{k}$ is found, all
$2$-arcs in its preimage in $\PHG(2,\Z_{25})$ are created as described below.
If in this search it turns out that even a smaller\footnote{The common partial order
on multisets is used.} point class distribution $\mathfrak{k}' < \mathfrak{k}$
does not admit an $(n,2)$-arc in its preimage, it is clear that there does not exist
an $(n,2)$-arc $\mathfrak{K}$ with $\Phi(\mathfrak{K}) = \mathfrak{k}$.
Thus $\mathfrak{k}'$, or even stronger all isomorphic copies of $\mathfrak{k}'$, are
a \emph{forbidden substructure} for the point class distributions of an
$(n,2)$-arc.
This restriction on the point class distributions is not covered by
Lemma~\ref{lma:ncl_cond} and can be used additionally for the ongoing construction
of point class distributions.

\subsection{Lifting point class distributions to $2$-arcs in $\PHG(2,\Z_{25})$}
After the generation of a possible point class distribution, the preimages in the
Hjelmslev plane are built.
Starting from $\mathfrak{K} = \emptyset$ we iteratively extend $\mathfrak{K}$ point-wise, with respect to the point
class distribution $\mathfrak{k}$ and the $2$-arc property.
To make our computation more efficient we construct the possible point sets only up
to isomorphism.
For the isomorphism test we use a combination of orderly generation \cite{0392.05001} and the ladder game, based on the homomorphism principle \cite{Laue-1993-BayMS43:53-96,Schmalz-1993-JCD1[2]:125-170,Laue-2001}.

The orderly generation algorithm needs an assignment of the predicate
\textit{canonical} to exactly one representative in each orbit.
The two-step approach also works for the computation of a canonical representative:
Having such a predicate in the factor plane $\PG(2,\Z_5)$ together with the homomorphism of group actions $\Phi$, by the Homomorphism Principle we are able to arrange the definition of canonicity in $\N^{\mathcal{P}(\PHG(2,\Z_{25}))}$ in such a way that the image of a canonical point set under the mapping $ \Phi$ is always canonical.
Furthermore by the Homomorphism Principle the search space for the canonical candidate can be reduced:
For a point set $\mathfrak{K}$ in the preimage of a canonical point class distribution $\mathfrak{k}$, the canonical form of $\mathfrak{K}$ must be in the orbit of $\mathfrak{K}$ under the $\Phi$-preimage of the stabilizer of $\mathfrak{k}$ in $\PGammaL(3,\Z_5)$.
This group usually is much smaller than the full group $\PGammaL(3,\Z_{25})$.

These methods were implemented in C++ and executed on a single CPU of type Intel Xeon E5520.
Running the program for $(22,2)$-arcs in $\PHG(2,\Z_{25})$ took
8.5 hours and gave no result.
Running the program for $(21,2)$-arcs took 13.5 hours and returned exactly one $(21,2)$-arc, the arc already
found in \cite{Kiermaier-Koch-2009-ProcOC11:106-113}.
This proves Theorem~\ref{thm:main}.

\section{Computation via the extension of $2$-arcs in the affine plane}
\label{sect:sascha}
Now we follow a different approach:
First the $(20,2)$- and $(19,2)$-arcs in the affine plane $\AHG(2,\Z_{25})$ are classified and then tested for extendability to a $(22,2)$- respectively $(21,2)$-arc in $\PHG(2,\Z_{25})$.
This is justified by the following lemma:

\begin{lemma}
\label{lma:projToAff}
Let $\mathfrak{K}$ be a $2$-arc in the projective Hjelmslev plane over $\Z_{25}$ or $\IS_5$ with $u(\mathfrak{K})\leq 2$.
Then there is a line class of multiplicity at most $2$.
Thus by removing this line class, it remains a $2$-arc in $\AHG(2,R)$ of size at least $\#\mathfrak{K}-2$.
\end{lemma}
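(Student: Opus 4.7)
The plan is to case-split on the value of $u(\mathfrak{K})$. In both $\Z_{25}$ and $\IS_5$ the factor plane is $\PG(2,5)$, so Lemma~\ref{lma:ncl_cond} applies with $q=5$. The case $u(\mathfrak{K})=2$ is immediate: any $2$-class is incident with a line class of multiplicity $2$ by Lemma~\ref{lma:ncl_cond}(\ref{lma:ncl_cond:ncl2}). The case $u(\mathfrak{K})=0$ is trivial since $\mathfrak{K}=\emptyset$. The substantive case is $u(\mathfrak{K})=1$, which I treat by contradiction.

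Under $u(\mathfrak{K})\leq 1$, every point class contains at most one point of $\mathfrak{K}$, so the multiplicity of a line class $[L]$ equals the number of $1$-classes on $\phi(L)$. View $\Phi(\mathfrak{K})$ as a subset $S\subseteq\PG(2,5)$ and put $T=\PG(2,5)\setminus S$. Suppose for contradiction that every line of $\PG(2,5)$ meets $S$ in at least $3$ points. By Lemma~\ref{lma:ncl_cond}(\ref{lma:ncl_cond:bs}), every line also meets $T$ in at least one point, so each line of $\PG(2,5)$ meets $T$ in exactly $1$, $2$, or $3$ points. Writing $l_i$ for the number of lines meeting $T$ in exactly $i$ points, the standard identities
\[
l_0+l_1+l_2+l_3=31,\quad l_1+2l_2+3l_3=6|T|,\quad l_2+3l_3=\binom{|T|}{2},
\]
together with $l_0=0$, eliminate $l_1$ and $l_3$ to give $l_2=19|T|-|T|^2-93$. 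The quadratic $|T|^2-19|T|+93$ has discriminant $361-372=-11<0$ and is therefore strictly positive on all of $\mathbb{R}$, so $l_2<0$, a contradiction.

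Once a line class $[L]$ of multiplicity at most $2$ has been identified, the standard embedding recalled in Section~\ref{sect:geometry} identifies $\PHG(2,R)\setminus[L]$ with $\AHG(2,R)$, with $[L]$ playing the role of the line class at infinity; the remaining points $\mathfrak{K}\setminus[L]$ then form a $2$-arc in $\AHG(2,R)$ of size at least $\#\mathfrak{K}-2$. The main obstacle is the $u(\mathfrak{K})\leq 1$ case: one must rule out every subset of $\PG(2,5)$ whose lines all carry between $3$ and $5$ points, and the argument closes only because of the specific $\PG(2,5)$-numerics embodied in the negative discriminant above.
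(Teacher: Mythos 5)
Your proof is correct and takes essentially the same route as the paper: Lemma~\ref{lma:ncl_cond}(\ref{lma:ncl_cond:ncl2}) disposes of the case of a $2$-class, and for $u(\mathfrak{K})=1$ the contradiction comes from the three standard equations on the line spectrum of the factor plane. The only difference is cosmetic --- you count points of the complement $T$ rather than the $1$-classes, and since $l_i=a_{6-i}$ and $\#T=31-\#\mathfrak{K}$, your quantity $l_2=-\#T^2+19\#T-93$ is literally the paper's $a_4=-\#\mathfrak{K}^2+43\#\mathfrak{K}-465$, with the same negative discriminant $-11$.
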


\begin{proof}
If there exists a $2$-class, a suitable line class is given by Lemma~\ref{lma:ncl_cond}(\ref{lma:ncl_cond:ncl2}).
Now assume $u(\mathfrak{K}) = 1$.
Let $a_i$, $i\in\{0,\ldots,6\}$ be the spectrum of the $1$-classes in the factor plane, that is $a_i$ denotes the number of line classes incident with exactly $i$ $1$-classes.
Because of Lemma~\ref{lma:ncl_cond}(\ref{lma:ncl_cond:bs}), we get $a_6 = 0$.

Now we assume $a_0 = a_1 = a_2 = 0$.
So the standard equations on the spectrum are $a_3 + a_4 + a_5 = 31$, $3 a_3 + 4 a_4 + 5 a_5 = 6\#\mathfrak{K}$ and $3 a_3 + 6 a_4 + 10 a_5 = \binom{\#\mathfrak{K}}{2}$.
Solving this system of equations yields $a_4 = -\#\mathfrak{K}^2 + 43\#\mathfrak{K} - 465 = - \left(\#\mathfrak{K} - \frac{43}{2}\right)^2 - \frac{11}{4} < 0$, which is a contradiction.
This shows that there is a line class of multiplicity at most $2$.
\end{proof}

\subsection{Canonization in $\AHG(2,\Z_{25})$}
\label{subsec_canonization}
First we outline how to efficiently determine a canonical representative in $\AHG(2,\Z_{25})$ of a point set $\mathfrak{K}$ which contains a point triple in general position (no $2$ points in the same point class, no $3$ points in the same line class).\footnote{This is not a hard restriction: Every $2$-arc $\mathfrak{K}$ with $\#\mathfrak{K}\ge 7$ and $\#(\mathfrak{K}\cap[x])\le 2$ for all point classes $[x]$ contains a triple of points in general position.}
By the map mapping $(x,y)\mapsto x + 25y$, computed in the integers, we can compare points.
Sets of points are compared by the lexicographic ordering.
The smallest point set isomorphic to $\mathfrak{K}$ is called \emph{canonical} form of $\mathfrak{K}$.
Because $\mathfrak{K}$ contains a triple of points in general position, the canonical form of $\mathfrak{K}$ fulfills $\left\{\begin{pmatrix}0\\0\end{pmatrix},\begin{pmatrix}1\\0\end{pmatrix},\begin{pmatrix}0\\1\end{pmatrix}\right\}\subset\mathfrak{K}$ (the points correspond to the integers $0$, $1$, and $25$).

In order to efficiently canonize $\mathfrak{K}$ with respect to $\AGL(2,\Z_{25})$ we loop over all triples $\mathbf{k}_1,\mathbf{k}_2,\mathbf{k}_3\in\mathfrak{K}$ in general position, set $\mathbf{b}=-\mathbf{k}_1$, and uniquely determine $\mathbf{A}'$ via $\mathbf{A}'(\mathbf{k}_2-\mathbf{k}_1)=\begin{pmatrix}1\\0\end{pmatrix}$, $\mathbf{A}'(\mathbf{k}_3-\mathbf{k}_1)=\begin{pmatrix}0\\1\end{pmatrix}$, where we use the notation from subsection~\ref{subsect:collineations}.\footnote{The matrix $\mathbf{A}'$ does not exist in the cases where $\mathbf{k}_1$, $\mathbf{k}_2$ and $\mathbf{k}_3$ are not in general position.}
Thus we obtain a canonizer which needs at most $\#\mathfrak{K}^3$ operations.
We can easily extend this to a canonizer for $\PGL(3,\Z_{25})\down$ by looping over all $25$~possibilities for $\mathbf{c}$ and determining $\mathbf{A}'$ via $\mathbf{A}'(\mathbf{k}_2-\mathbf{k}_1)=\begin{pmatrix}1+\mathbf{c}(x,y)^T\\0\end{pmatrix}$, $\mathbf{A}'(\mathbf{k}_3-\mathbf{k}_1)=\begin{pmatrix}0\\1+\mathbf{c}(x,y)^T\end{pmatrix}$.
If we have an arc $\mathfrak{K}$ which is canonical with respect to $\PGL(3,\Z_{25})\down$ we can also revert these steps to obtain a complete list of $25$ arcs which are isomorphic to $\mathfrak{K}$ and canonical with respect to $\AGL(2,\Z_{25})$.

\subsection{Classification of arcs in $\AHG(2,\Z_{25})$ by orderly generation and integer linear programming}
The classification of arcs in $\AHG(2,\Z_{25})$ is explained at the example of the $(20,2)$-arcs.
We use an orderly generation approach (see \cite{0392.05001}), where we utilize integer linear programming to prune the search tree. Each $(20,2)$-arc $\mathfrak{K}$ in $\AHG(2,\Z_{25})$ corresponds to a solution of the  binary linear program (BLP)
\begin{eqnarray*}
  \sum\limits_{i\in\mathcal{P}(\AHG(2,\Z_{25}))} x_i\ge 20 &&\\
  \sum\limits_{i\in L} x_i \le 2&&\forall L\in\mathcal{L}(\AHG(2,\Z_{25}))\\
  x_i\in\{0,1\}&&\forall i\in\mathcal{P}(\AHG(2,\Z_{25})),
\end{eqnarray*}
via $\mathfrak{K}=\left\{i\in\mathcal{P}(\AHG(2,\Z_{25}))\mid x_i=1\right\}$.
Due to symmetry and the large integrality gap, this BLP can not be solved directly using customary ILP solvers like \texttt{ILOG CPLEX}.
To deal with the large automorphism group, one possibility would be to apply techniques e.~g.{} from \cite{1023.90088}.
We chose another possibility: By fixing some $x_i$ to one, i.~e.{} by prescribing some points of $\mathfrak{K}$, the biggest part of the symmetries is broken and we can try to solve the resulting BLP in reasonable time.
Of course for the search to be complete, all the non-isomorphic possibilities of prescribing some $x_i$ must be dealt with separately.

We can utilize this approach to deduce that for all point classes $[x]$ it holds $\#(\mathfrak{K}\cap [x])\in\{0,1\}$ as follows.
First we assume that $\mathfrak{K}$ contains some points $k_1,\dots,k_6$ with $\phi(k_i)=\phi(k_j)$ if and only if either $i=j$ or $\{i,j\}\in\Big\{\{1,2\},\{3,4\},\{5,6\}\Big\}$, i.~e.{} there are at least three $2$-classes
For each of the $104$, with respect to $\AGL(2,\Z_{25})$, non-isomorphic such subsets $\left\{k_1,\dots,k_6\right\}$ we check that the corresponding BLP is infeasible.
Thus by now we know that a $(20,2)$-arc in $\AHG(2,\Z_{25})$ can have at most two $2$-classes.

Next we prescribe points $k_1,\dots,k_5$, where  $\phi(k_i)=\phi(k_j)$ if and only if either $i=j$ or $\{i,j\}\in\Big\{\{1,2\},\{3,4\}\Big\}$. Here there are only $18$, with respect to $\AGL(2, \Z_{25})$, non-isomorphic possibilities. To forbid a third $2$-class we add the inequalities
\[
  \sum_{i\in \mathcal{C}} x_i\le 1\quad\quad\forall \mathcal{C}\in \Big\{[k] : k\in \mathcal{P}(\AHG(2,\Z_{25}))\Big\}\backslash\Big\{\left[k_1\right],\left[k_3\right]\Big\}
\]
to the BLP.
Again it turned out that there are no feasible solutions.
In the final step we we prescribe points $k_1,\dots,k_5$, where  $\phi(k_i)=\phi(k_j)$ if and only if either $i=j$ or $\{i,j\}=\{1,2\}$. In all $185$, with respect to $\AGL(2,\Z_{25})$, non-isomorphic cases the corresponding BLP is infeasible.

In the following we can assume $\#(\mathfrak{K}\cap [x])\in\{0,1\}$ for all point classes $[x]$.
As a new ingredient we utilize the concept of orderly generation, i.~e.{} if we prescribe a set $\mathfrak{K}'$ of points then we only search for arcs $\mathfrak{K}$, where $\mathfrak{K}$ is lexicographically minimal with respect to $\AGL(2,\Z_{25})$, i.~e.{} where $\mathfrak{K}\le \psi(\mathfrak{K})$ for all automorphisms $\psi\in\AGL(2,\Z_{25})$, and where the smallest $\#\mathfrak{K}'$ elements of $\mathfrak{K}$ are equal to $\mathfrak{K}'$. To this end we can formulate some necessary linear constraints: If for given $\mathfrak{K}'$ there are points $j\in \mathfrak{K}'$, $i\in\mathcal{P}(\AHG(2,\Z_{25}))$ and an automorphism $\psi$ such that $\psi\Big(\{i\}\cup \mathfrak{K}'\backslash\{j\}\Big)<\mathfrak{K}'$ then we can add the constraint
\begin{eqnarray*}
  \label{ie_orderly_point_constraint}
  x_i=0
\end{eqnarray*}
to our BLP. If there are points $j_1\neq j_2\in \mathfrak{K}'$, $i_1\neq i_2\in\mathcal{P}(\AHG(2,\Z_{25}))$ and an automorphism $\psi$ such that $\psi\Big(\{i_1,i_2\}\cup \mathfrak{K}'\backslash\{j_1,j_2\}\Big)<\mathfrak{K}'$ then we can add the constraint
\begin{eqnarray*}
  \label{ie_orderly_edge_constraint}
  x_{i_1}+x_{i_2}\le 1
\end{eqnarray*}
to the BLP.
Using these constraints we were able to drastically reduce the number of lexicographically smallest sets $\mathfrak{K}'=\left\{k_1,\dots,k_5\right\}$ which possibly can be extended to $(20,2)$-arcs in $\AHG(2,\Z_{25})$ using the feasibility of the BLP. For each such \textit{partial} arc $\mathfrak{K}'$ we can easily determine a set $\mathfrak{K}''$ of points such that we have $\mathfrak{K}\cap\mathfrak{K}''=\emptyset$ for all lexicographically minimal $(20,2)$-arcs $\mathfrak{K}$ with $\mathfrak{K}'\subset\mathfrak{K}$. (This set consists of the $i\in\mathcal{P}(\AHG(2,\Z_{25}))$ where we would add $x_i=0$ to the BLP.)

After these preparative calculations we have classified the $(20,2)$-arcs in $\AHG(2,\Z_{25})$ using a branch\&bound approach as follows. We start with one of the $\mathfrak{K}'$, where the points from the corresponding sets $\mathfrak{K}''$ are forbidden, as described above, and extend the partial arcs point by point by branching on the point classes $[x]$. (See \cite{0297.68037} for the question where to branch.) I.~e.{} for a given point class $[x]$ we loop over all possibilities to extend the current partial arc with a point from $[x]$ or to take none of the points from $[x]$.

In the bounding step we perform two checks. If the cardinality of the partial arc $\mathfrak{K}$ is either at most $8$ or $20$ we check whether there is an automorphism $\psi\in\AGL(2,\Z_{25})$ such that the smallest five elements of $\psi(\mathfrak{K})$ are lexicographically smaller than the initial $\mathfrak{K}'$, see Subsection~\ref{subsec_canonization} for the details. If such a $\psi$ exists, we can prune the search tree (isomorphism-pruning). For the other check we count the number $e$ of point classes $[x]$, where there is a point $k\in[x]\backslash\left(\mathfrak{K}\cup\mathfrak{K}''\right)$ such that $\mathfrak{K}\cup\{k\}$ is a $2$-arc. If $\#\mathfrak{K}+e<20$ we can prune the search tree. (The latter bound relies on the fact that we only have to consider arcs without $2$-classes.)

The maximum size of a $2$-arc in $\AHG(2,R)$ is $20$, this was determined computationally in \cite{Kurz-2009-SJoC3:159-178}.
Now we can give the number of such $(20,2)$ arcs up to different automorphism groups:
In Table~\ref{table_affine_20er} we give the number of non-isomorphic $(20,2)$-arcs in $\AHG(2,\Z_{25})$ per size of its stabilizer $\Aut_1$ in $\AGL(2,\Z_{25})$ (collineations of $\AHG(2,\Z_{25})$ preserving the standard parallelism), $\Aut_2$ in $\PGL(3,\Z_{25})\down$ (all collineations of $\AHG(2,\Z_{25})$) and $\Aut_3$ in $\PGL(3,\Z_{25})$ (all collineations of $\PHG(2,\Z_{25})$, via the standard embedding). We verified that none of these arcs can be extended to a $(22,2)$-arc in $\PHG(2,\Z_{25})$.

In fact, all these $(20,2)$-arcs are maximal in $\PHG(2,\Z_{25})$.
This is consistent with our main result that the $(21,2)$-arc in $\PHG(2,\Z_{25})$ is unique, because all line classes of the $(21,2)$-arc contain at least two points of this arc.
 
\begin{table}[htp]
  \begin{center}
    \begin{tabular}{rrrrr}
      \hline
      $\# \Aut_\star$ & 1 & 2 & 4 & $\Sigma$\\
      \hline
      $\# \Aut_1$ & 11198 & 226 & 6 & 11430\\
      $\# \Aut_2$ & 420 & 62 & 6 & 488\\
      $\# \Aut_3$ & 415 & 47 & 4 & 466\\
      \hline
    \end{tabular}
    \caption{Number of non-isomorphic $(20,2)$-arcs in $\AHG(2,\Z_{25})$ per size of the automorphism group.}
    \label{table_affine_20er}
  \end{center}
\end{table}

In the same way, the $(19,2)$-arcs in $\AHG(2,\Z_{25})$ were generated up to isomorphism.
Interestingly enough, it turns out that a $(19,2)$-arc in $\AHG(2,\Z_{25})$ has no or exactly four $2$-classes.
The results of the former case are summarized in Table~\ref{table_affine_19er_no_doubly_nk}.
In the latter case there are exactly $75$, $3$, $3$ non-isomorphic (with respect to the different automorphism groups) solutions, each with trivial stabilizer in $\PGL(3,\Z_{25})$.

\begin{table}[htp]
  \begin{center}
    \begin{tabular}{rrrrr}
      \hline
      $\#\Aut_\star$ & 1 & 2 & 3 & $\Sigma$\\
      \hline
      $\#\Aut_1$ & 3922099 & 2555 & 33 & 3924687 \\
      $\#\Aut_2$ & 156669 & 511 & 33 & 157213\\
      $\#\Aut_3$ & 146610 & 511 & 33 & 147154\\
      \hline
    \end{tabular}
    \caption{Number of non-isomorphic $(19,2)$-arcs in $\AHG(2,\Z_{25})$ with maximum point class multiplicity $1$.}
    \label{table_affine_19er_no_doubly_nk}
  \end{center}
\end{table}

A check for extendability revealed that among the $157213 + 3$ $\PGL(3,\Z_{25})\down$-representatives of $(19,2)$-arcs in $\AHG(2,\Z_{25})$ there is only a single arc $\mathfrak{K}$ which is extendable to a $(21,2)$-arc in $\PHG(2,\Z_{25})$.
Furthermore, the extension of $\mathfrak{K}$ is unique up to isomorphism, so together with Lemma~\ref{lma:projToAff} this proves Theorem~\ref{thm:main}.

More precisely, $\mathfrak{K}$ is given by
\begin{multline*}
\{
(0,0),
(0,1),
(15,12),
(20,3),
(15,4),
(1,0),
(1,1),
(6,12),
(21,9),
(2,15),\\
(12,3),
(3,11),
(3,2),
(8,14),
(9,10),
(14,11),
(4,2),
(4,13),
(14,19)
\}.
\end{multline*}
By the standard embedding in $\PHG(2,\Z_{25})$, $\mathfrak{K}$ can be extended by the points
\[(20:1:5)\quad\mbox{and}\quad(5:1:7).\]

The uniqueness of $\mathfrak{K}$ necessarily means that that all affine subsets of size $19$ of the unique $(21,2)$-arc $\mathfrak{k}$ in $\PHG(2,\Z_{25})$ are isomorphic.
Again this is consistent with the analysis of $\mathfrak{k}$ given in \cite{Kiermaier-Koch-2009-ProcOC11:106-113}: $\mathfrak{k}$ has maximum point class multiplicity $u(\mathfrak{k}) = 1$, and in the factor plane the $0$-classes form a projective triangle $\Delta$ extended by the unique fixed point under the action of the stabilizer of $\Delta$ on the factor plane.
Therefore, there are $3$ possibilities for the choice of a line class at infinity, namely the $3$ edges of the projective triangle.
Under the stabilizer of $\mathfrak{k}$ which has order $3$, these three line classes are in the same orbit.

\section{Further results}
\label{sect:further}
\subsection{Number of non-isomorphic $(n,2)$-arcs for large $n$}
We used the same methods to compute further numbers of isomorphism types of $2$-arcs.
Wherever possible, both computational approaches were used to assure the correctness of the result.

Table~\ref{table_projective_arcs} shows the number of $\PGL(3,\Z_{25})$-isomorphism types of $(n,2)$-arcs in $\PHG(2,\Z_{25})$.
The column $s_i$ lists the number of $(n,2)$-arcs whose stabilizer has size $i$, and the column $\Sigma$ lists the numbers of all $(n,2)$-arcs.
In the column ''time'', the running time of the algorithm described in Section~\ref{sect:matthias} is given.

Remarkably, the stabilizers of large $2$-arcs in $\PHG(2,\Z_{25})$ are relatively small.
For comparison we mention that in $\PHG(2,\IS_5)$ there exists a $(25,2)$-arc whose stabilizer has order $300$.
This implies that using the method of prescribed automorphisms like in \cite{Kiermaier-Kohnert-2007-ProcOC10:112-119}, a $2$-arc of size at least $18$ in $\PHG(2,\Z_{25})$ is much harder to find than a $(25,2)$-arc in $\PHG(2,\IS_5)$.

\begin{table}[htp]
  \begin{center}
    \begin{tabular}{rrrrrrrrrrr}
      \hline
      $n$  &    $s_1$   & $s_2$   & $s_3$ & $s_4$ & $s_6$ & $s_8$ & $s_{12}$ & $s_{24}$ &   $\Sigma$ &       time \\
      \hline
      $22$ &            &         &       &       &       &       &          &          &        $0$ &   $30002$s \\
      $21$ &            &         &   $1$ &       &       &       &          &          &        $1$ &   $47750$s \\
      $20$ &      $591$ &  $63$   &       &   $4$ &       &       &          &          &      $658$ &  $155258$s \\
      $19$ &   $221374$ & $687$   &  $71$ &       &       &       &          &          &   $222132$ &  $487340$s \\
      $18$ & $23880140$ & $16842$ & $721$ & $271$ &  $73$ &  $11$ &      $9$ &      $4$ & $23897599$ & $2299265$s \\
      \hline
    \end{tabular}
    \caption{Number of non-isomorphic $(n,2)$-arcs in $\PHG(2,\Z_{25})$.}
    \label{table_projective_arcs}
  \end{center}
\end{table}

\subsection{The maximum size of a $2$-arc of given maximum point class multiplicity}
Using a combination of the computational results and geometric reasoning, we are ready to determine the maximum size of a $2$-arc $\mathfrak{K}$ in $\PHG(2,\Z_{25})$ and $\PHG(2,\IS_5)$ of a given maximum point class multiplicity $u(\mathfrak{K})$:

\begin{theorem}\label{thm:u}
Let $u\in\{1,\ldots,q+1\}$ and $m_{2,u}(R)$ be the maximum size of a $2$-arc $\mathfrak{K}$ in $\PHG(2,R)$ with $u(\mathfrak{K}) = u$.

For $R \in\{\Z_{25},\IS_5\}$ the values of $m_{2,u}(R)$ are:
\[
\begin{array}{c|cccccc}
   & \multicolumn{6}{|c}{u} \\
   \hline
   R & 1 & 2 & 3 & 4 & 5 & 6\\
   \hline
   \Z_{25} & 22 & 19 & 12 & 12 & 10 & 6\\
   \IS_5 & 25 & 22 & 12 & 12 & 10 & 6
\end{array}
\]
\end{theorem}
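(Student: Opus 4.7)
The plan is to prove each entry of the table by pairing an upper-bound argument, based on Lemma~\ref{lma:ncl_cond} and its refinement sketched in Remark~\ref{rmk:ubig}, with either an explicit construction or an appeal to the classification data from Sections~\ref{sect:matthias}--\ref{sect:sascha}. I would work through the values of $u$ in decreasing order, since large $u$ concentrates the arc in a single point class and the combinatorics are most transparent there.

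For $u=q+1=6$ the arc points must all lie in one point class $[x]$, forming an oval in $\Pi_{[x]}\cong\AG(2,\F_5)$; the six line classes through $[x]$ are already exhausted by the pair-determined line classes, and Remark~\ref{rmk:ubig} forbids any external extension, giving $m_{2,6}=6$. For $u\in\{5,4,3\}$ the same strategy yields the bounds $10$, $12$, $12$: the $\binom{u}{2}$ pair-determined line classes constrain the multiplicity and oval structure of the other line classes through $[x]$ via Lemma~\ref{lma:ncl_cond}(\ref{lma:ncl_cond:q+2})--(\ref{lma:ncl_cond:q+1}), while the secant-count observation of Remark~\ref{rmk:ubig} caps the contribution outside $[x]$ of every line class of multiplicity exceeding $q+1=6$. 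Matching lower-bound arcs are produced directly in each case.

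For $u=2$ the bound $q^2-q+2=22$ is Lemma~\ref{lma:ncl_cond}(\ref{lma:ncl_cond:u2}); in $\PHG(2,\IS_5)$ it is attained by the arc of \cite{Kiermaier-Koch-2009-ProcOC11:106-113} via Lemma~\ref{lma:22_2_class_types}(\ref{lma:22_2_class_types:2}). For $\Z_{25}$, Theorem~\ref{thm:main} together with the fact that the unique $(21,2)$-arc has $u=1$, and the classification of $(20,2)$-arcs from Section~\ref{sect:sascha} (none of which carries a $2$-class), forces $m_{2,2}(\Z_{25})=19$; this value is attained by the $(19,2)$-arcs with four $2$-classes enumerated after Table~\ref{table_affine_19er_no_doubly_nk}. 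The $u=1$ row is read off directly: Theorem~\ref{thm:main} handles $\Z_{25}$, and the known $(25,2)$-arc together with the upper bound $q^2=25$ from \cite{Honold-Kiermaier-maximal2Arcs-2010} handles $\IS_5$.

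The hard part will be the refined upper bound for $u\in\{3,4\}$: the crude estimate $q^2-2q+3=18$ from Lemma~\ref{lma:ncl_cond}(\ref{lma:ncl_cond:ncl3}) is considerably weaker than the target $12$, and the sharp value emerges only after carefully tracking the interaction between pair-determined line classes through $[x]$ and the secant structure of ovals in $\Pi_{[L]}$ for each line class $[L]$ of large multiplicity. The coincidence $m_{2,3}=m_{2,4}$ is noteworthy: raising $u$ from $3$ to $4$ simultaneously absorbs one extra arc point inside $[x]$ and removes one more line class through $[x]$ from contributing externally, so the two effects cancel and both bounds collapse to $12$.
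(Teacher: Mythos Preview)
Your overall plan matches the paper's proof: decreasing induction on $u$, with the cases $u\ge 3$ handled geometrically via Lemma~\ref{lma:ncl_cond} and Remark~\ref{rmk:ubig}, and $u\in\{1,2\}$ read off from the computer classifications and the cited constructions. Two points where your sketch diverges from what the paper actually does are worth flagging.

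For $u\in\{3,4\}$ the paper does \emph{not} obtain the bound $12$ by looking only at the line classes through a single $u$-class $[x]$, as your last paragraph suggests. Instead it introduces the \emph{type} of a line class (the sorted tuple of incident point-class multiplicities), rules out all types $\ge(3,3,1,0,0,0)$ and all types strictly above $(2,2,1,1,0,0)$ or $(2,2,2,0,0,0)$, and then splits on the total number $\#P_u$ of $u$-classes. For $u=4$ the subcases $\#P_4=1,2,3$ give bounds $10,11,12$ respectively, with equality only when three $4$-classes form a triangle in the factor plane; for $u=3$ an analogous split (on whether there are three non-collinear $3$-classes, exactly two, or one) again caps at $12$. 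Your ``absorb one point, lose one line class'' heuristic does not by itself deliver $12$, and the coincidence $m_{2,3}=m_{2,4}$ in the paper comes from different extremal configurations, not a cancellation at a single $[x]$.

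For $m_{2,2}(\Z_{25})<20$ your appeal to Section~\ref{sect:sascha} is not quite enough: that section shows only that every \emph{affine} $(20,2)$-arc has $u=1$, and Lemma~\ref{lma:projToAff} merely guarantees that a projective $(20,2)$-arc with $u\le 2$ restricts to an affine $2$-arc of size at least $18$, not $20$. The paper's proof relies on the full projective classification (Section~\ref{sect:matthias}, Table~\ref{table_projective_arcs}), whose first step is precisely the enumeration of point-class distributions, so the value of $u$ for each of the $658$ projective $(20,2)$-arcs is known.
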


\begin{proof}
Let $P_u$ be the set of all $u$-classes and $x$ be a $u$-class.

For a line class $[L]$, the non-increasing sequence of the multiplicities of the point classes incident with $[L]$ is called \emph{type} of $[L]$.
The type of $[L]$ is a partition of the multiplicity of $[L]$ into $6$ summands.
In the following, we assume the usual partial order on the set of partitions.

By Lemma~\ref{lma:ncl_cond}(\ref{lma:ncl_cond:q+1}), types greater or equal to $(2,1,1,1,1,0)$ or greater or equal to $(1,1,1,1,1,1)$ are impossible, and by the argument in Remark~\ref{rmk:ubig}, types greater than $(2,2,1,1,0,0)$ or greater than $(2,2,2,0,0,0)$ are impossible.

Furthermore, types greater or equal to $(3,3,1,0,0,0)$ are impossible:
Assume that there is a line class $[L]$ of type $(3,3,1,0,0,0)$.
Let $x$ be the point of $\mathfrak{K}$ corresponding to the entry $1$ in the type.
All lines of $\Pi_{[L]}$ incident with $x$ contain at most $2$ points of the induced point set $\mathfrak{k}$, and the line incident with $x$ and $p_\infty$ intersects $\mathfrak{k}$ only in $x$.
This gives $\#\mathfrak{k} \leq 6$, a contradiction.

For $u = 6$, it is clear that the points of $\mathfrak{K}$ in $[x]$ must form an oval, and no further point can be added to $\mathfrak{K}$.

For $u = 5$, the points in $[x]$ determine all but possibly one line class $[L]$ incident with $[x]$.
To get a $2$-arc $\mathfrak{K}$, at most one further point class $[y]$ can have non-empty intersection with $\mathfrak{K}$, and $[y]$ must be incident with $[L]$.
So $\#\mathfrak{K} \leq 10$.
If we arrange the points of $\mathfrak{K}$ within $[x]$ and $[y]$ such that they do not determine $[L]$, we get indeed a $(10,2)$-arc, showing $m_{2,5}(R) = 10$.

For $u = 4$, each $4$-class $[y]$ is incident with at most $2$ line classes not determined by $[y]$.
By the restrictions on the types, no three $4$-classes are collinear.
So $\#P_4\leq 3$.
For $\#P_4 = 3$, all point classes not in $P_4$ are $0$-classes.
Furthermore, it is easy to see that there is indeed a $(12,2)$-arc of this structure.
For $\#P_4 = 2$, let $P_4 = \{[x],[y]\}$.
The line class $[L]$ incident with $[x]$ and $[y]$ must be of type $(4,4,0,0,0,0)$.
Outside of $[L]$ there is at most one point class of non-zero multiplicity, so $\#\mathfrak{K}\leq 2\cdot 4 + 3 = 11$.
For $\#P_4 = 1$ the restrictions on the types show that a line class incident with $[x]$ has multiplicity at most $7$, which gives $\#\mathfrak{K} \leq 4 + 2\cdot (7 - 4) = 10$.

Now assume $u = 3$.
Again, no three $3$-classes are collinear.
Each $3$-class is incident with at least three line classes of multiplicity $3$.
So if there are three non-collinear $3$-classes, there is at most one further point class which is not a $0$-class.
This leads to $\#\mathfrak{K}\leq 4\cdot 3 = 12$, and it is straightforward to check that there is a $(12,2)$-arc of this structure.
For $\#P_3 = 2$, the line class $[L]$ incident with the two $3$-classes must be of type $(3,3,0,0,0,0)$.
Outside of $[L]$ there remain at most $4$ point classes $[x_i]$ which are not a $0$-class.
There are $4$ line classes through a $3$-class and two distinct point classes $[x_i]$, $[x_j]$.
Since the type $(3,2,2,0,0,0)$ is not possible, at most $2$ of the point classes $[x_i]$ can be $2$-classes.
In total, $\mathfrak{K} \leq 2\cdot 3 + 2\cdot 2 + 2\cdot 1 = 12$.
In the case $\#P_3 = 1$, by the restrictions on the types the line classes incident with $[x]$ have multiplicity at most $6$, so again $\#\mathfrak{K}\leq 3 + 3\cdot (6 - 3) = 12$.

The values $m_{2,u}(\Z_{25})$ for $u\in\{1,2\}$ follow from the computer classification in this article.
$m_{2,2}(\IS_{5}) = 22$ follows from the existence of the $(22,2)$-arc over $\IS_5$ given in \cite{Kiermaier-Koch-2009-ProcOC11:106-113} and Lemma~\ref{lma:ncl_cond}(\ref{lma:ncl_cond:u2}).
Finally, the value $m_{2,1}(\IS_5) = 25$ follows from \cite{Honold-Kiermaier-maximal2Arcs-2010}.
\end{proof}

\bibliographystyle{amsplain}

\providecommand{\bysame}{\leavevmode\hbox to3em{\hrulefill}\thinspace}
\providecommand{\MR}{\relax\ifhmode\unskip\space\fi MR }
\providecommand{\MRhref}[2]{%
  \href{http://www.ams.org/mathscinet-getitem?mr=#1}{#2}
}
\providecommand{\href}[2]{#2}

\medskip
 {\it E-mail address: }michael.kiermaier@uni-bayreuth.de\\
 \indent{\it E-mail address: }matthias.koch@uni-bayreuth.de\\
  \indent{\it E-mail address: }sascha.kurz@uni-bayreuth.de\\
\end{document}